\definecolor{LemonChiffon}{rgb}{100, 98, 80}
\definecolor{myblue}{rgb}{0,0.4,0.8}
\definecolor{orange}{rgb}{1, 0.4, 0}
\definecolor{mygreen}{rgb}{0, 0.8, 0.2}
\definecolor{myred}{rgb}{204, 0, 0}
\definecolor{violet}{RGB}{0.4,0.2,1}
\definecolor{brown}{rgb}{0.6, 0.4, 0}
\newtheorem{theorem}{Theorem}[section]
\newtheorem{proposition}[theorem]{Proposition}
\newtheorem{corollary}[theorem]{Corollary}
\newtheorem{example}[theorem]{Example}
\theoremstyle{definition}
\newtheorem{definition}[theorem]{Definition}
\theoremstyle{definition}
\newcounter{statement}
\newcommand{\statement}[2]{%
 \begin{equation}\refstepcounter{statement}\tag{S\thestatement}\label{#1}%
  \parbox{\dimexpr\linewidth-4em}{#2}%
 \end{equation}
}
\begin{document}

\title{Linear-Time Computation of the Frobenius Normal Form for Symmetric Toeplitz Matrices via Graph-Theoretic Decomposition}
\author{Hojin Chu$^{1}$, Homoon Ryu$^{2}$ \\
 {\footnotesize $^{1}$ \textit{School of Computational Sciences,
Korea Institute for Advanced Study(KIAS),}}\\{\footnotesize\textit{
Seoul 02455, Rep. of Korea}}\\
{\footnotesize $^{2}$ \textit{Research Institute for Basic Science,
Ajou University,}}\\{\footnotesize\textit{
Suwon 16499, Rep. of Korea}}\\
{\footnotesize\textit{
hojinchu@kias.re.kr, ryuhomoon@ajou.ac.kr}}\\
{\footnotesize}}
\date{}
\maketitle

\begin{abstract}
We introduce a linear-time algorithm for computing the Frobenius normal form (FNF) of symmetric Toeplitz matrices by utilizing their inherent structural properties through a graph-theoretic approach.
Previous results of the authors established that the FNF of a symmetric Toeplitz matrix is explicitly represented as a direct sum of symmetric irreducible Toeplitz matrices, each corresponding to connected components in an associated weighted Toeplitz graph. 
Conventional matrix decomposition algorithms, such as Storjohann’s method (1998), typically have cubic-time complexity. 
Moreover, standard graph component identification algorithms, such as breadth-first or depth-first search, operate linearly with respect to vertices and edges, translating to quadratic-time complexity solely in terms of vertices for dense graphs like weighted Toeplitz graphs.
Our method uniquely leverages the structural regularities of weighted Toeplitz graphs, achieving linear-time complexity strictly with respect to vertices through two novel reductions: the \textit{$\alpha$-type reduction}, which eliminates isolated vertices, and the \textit{$\beta$-type reduction}, applying residue class contractions to achieve rapid structural simplifications while preserving component structure. 
These reductions facilitate an efficient recursive decomposition process that yields linear-time performance for both graph component identification and the resulting FNF computation. 
This work highlights how structured combinatorial representations can lead to significant computational gains in symbolic linear algebra.
\end{abstract}

    \noindent
{\it Keywords.} Symmetric Toeplitz matrix; Frobenius normal form; Weighted Toeplitz graph; Graph Algorithm; Linear-time complexity.

\noindent
{{{\it 2020 Mathematics Subject Classification.} 05C22, 05C50, 05C85, 15A21, 15B05}}


\section{Introduction}

Toeplitz matrices arise frequently in numerical analysis, signal processing, and applied mathematics due to their structured form and favorable algebraic properties~\cite{gray2006toeplitz, T_bottcher2013analysis}. Their regularity enables efficient storage and operations, making them especially attractive for large-scale problems.
A central tool in understanding such matrices is the Frobenius normal form (FNF), which captures key algebraic invariants and is used in similarity classification and minimal polynomial analysis. However, traditional algorithms for computing the FNF, such as Storjohann’s cubic-time method~\cite{storjohann1998algorithms}, are computationally expensive and impractical for large structured matrices.

Recent work by Chu and Ryu~\cite{chu2024structural} established that the FNF of a symmetric Toeplitz matrix can be decomposed into a direct sum of symmetric irreducible Toeplitz matrices, each associated with a connected component in a certain weighted Toeplitz graph. This observation reveals a deep structural connection between algebraic matrix decomposition and combinatorial graph theory.

In this paper, we leverage this connection to develop a linear-time algorithm for computing the FNF of symmetric Toeplitz matrices. The core idea is to reinterpret the problem of matrix decomposition into a graph component identification problem in a highly regular, weighted graph. While general graph traversal algorithms such as breadth-first search or depth-first search typically require time proportional to the number of vertices and edges, this can be as large as quadratic in the number of vertices for dense graphs. In contrast, our method leverages the diagonal-constant and residue-class regularity of Toeplitz graphs to achieve linear-time complexity in the number of vertices.
Our approach is built on two graph reduction mechanisms:
\begin{itemize}
\item The \emph{$\alpha$-type reduction}, which eliminates isolated vertices to prevent unnecessary computation;
\item The \emph{$\beta$-type reduction}, which contracts vertices by residue classes, exploiting the periodic structure of Toeplitz diagonals while preserving connectivity.
\end{itemize}

These reductions form the foundation of a recursive decomposition strategy that simultaneously identifies graph components and realizes the FNF decomposition. The resulting algorithm thus unifies two objectives, graph analysis and matrix transformation, within a single efficient computational framework.
This unified perspective demonstrates how the algebraic and combinatorial features of Toeplitz matrices can be leveraged to improve both theoretical bounds and practical efficiency in canonical form computation.

The remainder of the paper is organized as follows. Section~2 introduces preliminaries and notation. Section~3 establishes the theoretical foundations of our reduction framework. Section~4 describes the algorithm and analyzes its complexity. Section~5 concludes with directions for future work.

\section{Preliminaries}

For graph-theoretical terms and notations not defined in this paper, we follow \cite{bondy2010graph}.

A {\it Toeplitz matrix} is a matrix in which each descending diagonal from left to right is constant.
That is, an $n\times n$ matrix $T=(t_{i,j})_{1\le i,j\le n}$ is a Toeplitz matrix if $t_{i,j}=t_{i+1,j+1}$ for each $1 \le i,j \le n-1$.
As a variation of the study of Toeplitz matrices in terms of graph theory, ``Toeplitz graph" was introduced and investigated with respect to hamiltonicity by van Dal {\it et al.}~\cite{G1_van1996hamiltonian} in 1966.
Results regarding Toeplitz graphs can be referenced in~\cite{G_cheon2023matrix,G_euler1995characterization,G_euler2013planar,G_heuberger2002hamiltonian,G_liu2019computing,G_mojallal2022structural,G_nicoloso2014chromatic}.
In \cite{chu2024structural}, the authors naturally generalized Toeplitz graphs to ``weighted Toeplitz graphs".

To define ``weighted Toeplitz graph", we explain some concepts.
The \emph{adjacency matrix} of a weighted graph $(G,w)$ is a square matrix $A$ of size $n \times n$ such that its element $a_{ij}$ is equal to the weight of an edge joining $v_i$ and $v_j$ when it exists, and zero when there is no edge joining $v_i$ and $v_j$. 
Obviously, the adjacency matrix of a graph is symmetric.
Note that given a symmetric matrix $A$, there is a weighted graph $(G, w)$ whose adjacency matrix is $A$, and $(G,w)$ is uniquely determined up to isomorphism.
Given two integers $m$ and $n$ with $m\le n$, let $\llbracket m,n \rrbracket$ be the set of integers between $m$ and $n$, that is,
\[ \llbracket m,n \rrbracket =\{k \in \mathbb{Z} \colon\, m \le k \le n\}. \]
We simply write $\llbracket 1,n \rrbracket$ as $[n]$.
In \cite{chu2024structural}, a weighted Toeplitz graph was defined as follows. 
\begin{definition}\label{def}
	Given an $n\times n$ symmetric Toeplitz matrix 
	\[A:=\begin{bmatrix}
  a_0 & a_{1}   & a_{2} & \cdots & \cdots & a_{n-1} \\
  a_1 & a_0      & a_{1} & \ddots &        & \vdots \\
  a_2 & a_1      & \ddots & \ddots & \ddots & \vdots \\ 
 \vdots & \ddots & \ddots & \ddots & a_{1} & a_{2} \\
 \vdots &        & \ddots & a_1    & a_0    & a_{1} \\
a_{n-1} & \cdots & \cdots & a_2    & a_1    & a_0
\end{bmatrix}, \]
we may denote $A$ by $T[a_0, a_1, \ldots, a_{n-1}]$ since a symmetric Toeplitz matrix is uniquely determined by its first row.  
We let \[S_A = \{ i \in \llbracket 0,n-1 \rrbracket \colon\, a_i \neq 0 \} \]
and $w_A$ be a map from $S_A$ to $\mathbb{R}\setminus \{0\}$ defined by 
\[
w_A(s) = a_s.
\]

Let $A= T[a_0, a_1, \ldots, a_{n-1}]$. 
The edge-weighted graph $(G,w)$ is uniquely determined by $A$ in the following way:
\[V(G) = [n], \quad E(G) = \{ ij \colon\, i,j \in V(G), \ |i-j| \in S_A\},\quad  \text{and}\quad  w(ij) = w_A(|i-j|). \]
This graph is denoted by $G(A)$. 
We mean by a {\it weighted Toeplitz graph} the edge-weighted graph of a symmetric Toeplitz matrix 
(see Figure~\ref{fig:wtex} for an illustration).
\end{definition}

Note that ``Toeplitz graph" introduced in~\cite{G1_van1996hamiltonian} is the weighted Toeplitz graph of a symmetric Boolean Toeplitz matrix.

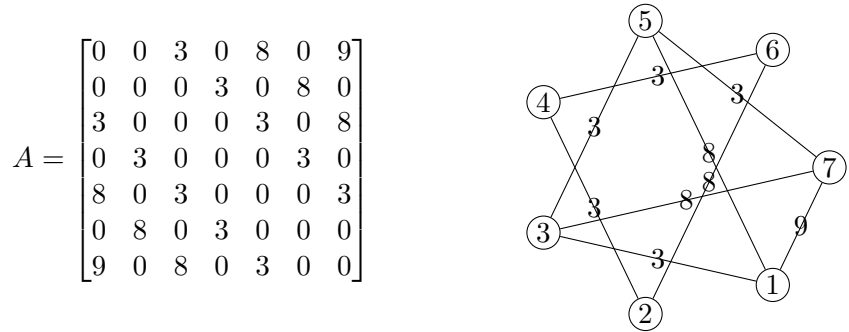
\begin{figure}
\begin{center}
\begin{equation*}
A=\begin{bmatrix}
0 & 0 & 3 & 0 & 8 & 0 & 9 \\
0 & 0 & 0 & 3 & 0 & 8 & 0 \\
3 & 0 & 0 & 0 & 3 & 0 & 8 \\
0 & 3 & 0 & 0 & 0 & 3 & 0 \\
8 & 0 & 3 & 0 & 0 & 0 & 3 \\
0 & 8 & 0 & 3 & 0 & 0 & 0 \\
9 & 0 & 8 & 0 & 3 & 0 & 0
\end{bmatrix}
\hspace{2cm}
\begin{tikzpicture}[baseline={(0,0)}, scale = 1]
\tikzstyle{vertex} = [circle,draw,fill=white,inner sep=1.3];
 \foreach \i in {1,...,7}{
      \node[vertex] (\i) at (-360*\i/7:2) {$\i$};
      }
\draw[black,>=stealth]  (1) -- (3) node[midway, ] {$3$};
\draw[black,>=stealth]  (2) -- (4) node[midway,] {$3$};
\draw[black,>=stealth]  (3) -- (5) node[midway, ] {$3$};
\draw[black,>=stealth]  (4) -- (6) node[midway, ] {$3$};
\draw[black,>=stealth]  (5) -- (7) node[midway, ] {$3$};
\draw[black,>=stealth]  (1) -- (5) node[midway, ] {$8$};
\draw[black,>=stealth]  (2) -- (6) node[midway, ] {$8$};
\draw[black,>=stealth]  (3) -- (7) node[midway, ] {$8$};
\draw[black,>=stealth]  (1) -- (7) node[midway, ] {$9$};

\end{tikzpicture}
\end{equation*}
\caption{A symmetric Toeplitz matrix $A= T[0,0,3,0,8,0,9]$ and its graph}\label{fig:wtex}
\end{center}
\end{figure}

Given a weighted graph $(G,w)$, a maximal connected subgraph of $(G,w)$ is called a \emph{component} of $(G,w)$.
A square matrix $A$ is called \emph{reducible} if by a simultaneous permutation of its lines, we can obtain a matrix of the form
\[
\begin{bmatrix}
A_{1} & A_{12} \\
O & A_{2}
\end{bmatrix}
\]
where $A_1$ and $A_2$ are square matrices of order at least one. 
If $A$ is not reducible, then $A$ is called \emph{irreducible}. 
The following well-known theorem gives a relation between the connectivity of a weighted graph and the irreducibility of its adjacency matrix. 

\begin{theorem}\label{thm:irreducible_connect}
A weighted graph is connected if and only if its adjacency matrix is irreducible.
\end{theorem}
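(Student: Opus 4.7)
The plan is to prove both directions by contrapositive, crucially leveraging the fact that the adjacency matrix of a weighted graph is symmetric, so any simultaneous row/column permutation preserves symmetry.

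First I would prove that if the adjacency matrix $A$ of $(G,w)$ is reducible, then $(G,w)$ is disconnected. By definition, there exists a permutation matrix $P$ such that $P^{\top} A P = \begin{bmatrix} A_1 & A_{12} \\ O & A_2 \end{bmatrix}$, where $A_1$ and $A_2$ are square of sizes $k \ge 1$ and $n-k \ge 1$. Because $A$ is symmetric, so is $P^{\top} A P$, and comparing the off-diagonal blocks forces $A_{12} = O$. Hence $P^{\top} A P$ is block-diagonal. The permutation corresponds to relabeling the vertices of $G$; in the new ordering, the first $k$ vertices form a set $V_1$ and the remaining $n-k$ vertices form a set $V_2$, and the zero off-diagonal blocks mean $w(ij) = 0$ whenever $i \in V_1$ and $j \in V_2$. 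Thus $G$ has no edge between $V_1$ and $V_2$, so $(G,w)$ is disconnected.

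Next I would prove that if $(G,w)$ is disconnected, then $A$ is reducible. Let $V_1$ be the vertex set of one connected component of $(G,w)$ and set $V_2 = V(G) \setminus V_1$; both are nonempty because $(G,w)$ is disconnected. By the definition of a component, there is no edge between $V_1$ and $V_2$. Let $\pi$ be any permutation of $[n]$ that lists the vertices in $V_1$ first, followed by the vertices in $V_2$, and let $P$ be the corresponding permutation matrix. Then $P^{\top} A P$ is block-diagonal with blocks of sizes $|V_1|$ and $|V_2|$, which is a special case of the reducible form with $A_{12} = O$. Hence $A$ is reducible.

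This is a classical result, and no real obstacle is expected; the only subtle point is the appeal to symmetry in the first direction, which promotes the apparently block upper-triangular form to a block-diagonal form and thereby lets us translate algebraic reducibility into the graph-theoretic statement that no edge crosses the partition.
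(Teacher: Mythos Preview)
Your proof is correct; this is the standard argument for this classical equivalence. The paper itself does not supply a proof of this theorem at all: it merely states it as ``well-known'' and moves on, so there is no approach to compare against. One minor phrasing point: when you write ``$w(ij)=0$ whenever $i\in V_1$ and $j\in V_2$,'' it would be cleaner to say ``there is no edge between $i$ and $j$,'' since in the paper's conventions an entry $a_{ij}=0$ signals the absence of an edge rather than an edge of weight zero (weights in the Toeplitz setting lie in $\mathbb{R}\setminus\{0\}$). This does not affect the validity of the argument.
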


In~\cite{chu2024structural}, the authors showed that a weighted Toeplitz graph is also component-wise weighted Toeplitz.
The vertex set of a component of a weighted Toeplitz graph may not have consecutive labels of the vertices. 
When the vertex set of a component is labeled as $n_1<n_2< \cdots< n_k$, relabeling it as $1, 2, \ldots, k$ is called {\it normalized labeling}.
Normalized labeling can be considered as a bijective map.

\begin{theorem}[\cite{chu2024structural}]\label{thm:sqz}
Each component of a weighted Toeplitz graph is isomorphic to a weighted Toeplitz graph under the normalized labeling of its vertex set.
\end{theorem}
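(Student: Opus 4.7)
The plan is to reduce the statement to showing that, under the normalized labeling, the adjacency matrix of the component is itself symmetric Toeplitz. Since any weighted graph is determined up to isomorphism by its adjacency matrix, this will immediately realize the normalized component as the weighted Toeplitz graph of that matrix, yielding the desired isomorphism.

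Let $C$ be a component of $G(A)$ with $V(C)=\{n_1<n_2<\cdots<n_k\}$. Under the relabeling $n_i\mapsto i$, the $(i,j)$-entry of the component's adjacency matrix is
\[
b_{ij}=\begin{cases} a_{|n_i-n_j|} & \text{if } |n_i-n_j|\in S_A,\\ 0 & \text{otherwise}.\end{cases}
\]
Proving that the component is Toeplitz amounts to showing that, for each fixed $\ell\in[k-1]$, the value $b_{i,i+\ell}$ is independent of $i\in[k-\ell]$. I would start from a basic arithmetic observation: every edge of $G(A)$ changes the vertex label by an element of $S_A^+:=S_A\setminus\{0\}$, and each such element is divisible by $d:=\gcd(S_A^+)$; hence any two vertices in the same component are congruent modulo $d$. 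In the principal case in which $V(C)$ forms a full arithmetic progression with common difference $d$ (which occurs, for instance, whenever $n$ is large relative to $\max S_A$), one has $n_{i+\ell}-n_i=\ell d$ for every admissible $i$, and the first row of the normalized adjacency matrix is simply $(a_0,a_d,a_{2d},\ldots,a_{(k-1)d})$, with the convention that $a_s=0$ whenever $s\notin S_A$; the Toeplitz property is then immediate.

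The main obstacle is the residual case in which $V(C)$ fails to be an arithmetic progression, which can occur due to boundary effects when $n$ is small. Here the key claim, which I would try to establish by contradiction, is that whenever two distinct window-sums $n_{i+\ell}-n_i$ and $n_{i'+\ell}-n_{i'}$ appear for the same $\ell$, both must lie outside $S_A$; equivalently, any value of $\ell$ that produces a non-singleton set of window-sums corresponds to an all-zero diagonal in the normalized adjacency matrix. Assuming otherwise, say $n_{i+\ell}-n_i\in S_A$ while $n_{i'+\ell}-n_{i'}\neq n_{i+\ell}-n_i$, one obtains the edge $n_i n_{i+\ell}$ inside $C$ together with a gap-path of length $\ell$ starting at $n_{i'}$; translating this configuration by a carefully chosen multiple of $d$, and invoking the involutive automorphism $i\mapsto n+1-i$ of $G(A)$ to control the symmetry between gaps near the two ends of $C$, the aim would be to produce a vertex of $[n]$ adjacent to some element of $V(C)$ yet not contained in $V(C)$, contradicting the maximality of $C$ as a connected component. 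I expect this contradiction step, which requires careful bookkeeping of the window-sum structure of the gap sequence $g_i:=n_{i+1}-n_i$, to be the most delicate part of the proof.
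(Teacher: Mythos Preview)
The paper does not contain a proof of Theorem~\ref{thm:sqz}; it is quoted from \cite{chu2024structural} and used as a black box for the algorithmic development. There is therefore no in-paper argument to compare your attempt against.

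On its own merits, your proposal has the right framing but a genuine gap. Reducing to the claim that the normalized adjacency matrix is symmetric Toeplitz is correct, and the ``principal case'' where $V(C)$ is an arithmetic progression of step $d=\gcd(S_A\setminus\{0\})$ is indeed trivial. The entire content of the theorem, however, sits in your ``residual case,'' and there you do not give a proof but only a plan. Your key claim---that if the window-sums $n_{i+\ell}-n_i$ are not constant in $i$ then none of them lie in $S_A$---is exactly the statement one must prove (for generic weights it is equivalent to the Toeplitz property), but the contradiction sketch you offer is too vague to carry weight. Two concrete worries: the involution $i\mapsto n+1-i$ is a global automorphism of $G(A)$ that need not stabilize the particular component $C$, so invoking it ``to control the symmetry between gaps near the two ends of $C$'' has no force unless you first show $C$ is $\sigma$-invariant; and ``translating by a multiple of $d$'' is liable to push vertices outside $[n]$, which is precisely the boundary effect that creates the residual case to begin with. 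You yourself label this step ``the most delicate part of the proof,'' and as written it is a hope rather than an argument. Completing it requires a hard structural statement about the gap sequence $g_i=n_{i+1}-n_i$ (for instance, that it takes at most two values, arranged in a very specific block pattern), and that is where the real work lies.
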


Let $A$ be a square matrix of order $n$. 
Then there exists a permutation matrix $P$ of order $n$ and an integer $t \ge 1$ such that 
\begin{equation}\label{eq:PNF}
P A P^T = 
\begin{pmatrix}
A_1 & A_{12} & \cdots & A_{1t} \\
O & A_2 & \cdots &A_{2t} \\
\vdots & \vdots & \ddots & \vdots \\
O & O & \cdots & A_t
\end{pmatrix}
\end{equation}
where $A_1, A_2, \ldots, A_t$ are square irreducible matrices. 
The matrices in equation~\eqref{eq:PNF} occur as diagonal blocks and uniquely determined up to simultaneous permutation of their lines. 
The form in equation~\eqref{eq:PNF} is called the \emph{Frobenius normal form (FNF)} of the square matrix $A$. 
If $A$ is symmetric, then its Frobenius normal form is as follows:
\begin{equation}\label{eq:SYM}
P A P^T = 
\begin{pmatrix}
A_1 & O & \cdots &O \\
O & A_2 & \cdots &O \\
\vdots & \vdots & \ddots & \vdots \\
O & O & \cdots & A_t
\end{pmatrix}
\end{equation}
where $A_1, A_2, \ldots, A_t$ are symmetric irreducible matrices. 
Then, by Theorem~\ref{thm:irreducible_connect}, it is well-known that each of $A_1, A_2, \ldots, A_t$ in equation~\ref{eq:SYM} equals to the adjacency matrix of a component of the graph $(G,w)$ whose adjacency matrix is $A$.
In a matrix version, Theorem~\ref{thm:sqz} characterizes the FNF of a symmetric Toeplitz matrix.

\begin{theorem}[\cite{chu2024structural}]\label{thm:FNF}
For a symmetric Toeplitz matrix $A$, there exists a permutation matrix $P$ such that 
\[
P^TAP = 
\begin{bmatrix}
T_1 & O & \cdots & O \\
O & T_2 & \cdots & O \\
\vdots & \vdots & \ddots & \vdots \\
O & O & \cdots & T_k
\end{bmatrix}
\]
for some positive integer $k$ and irreducible symmetric Toeplitz matrices $T_1, \ldots, T_k$ such that $T_i$ is a principal submatrix of $T_j$ if and only if $i\ge j$.
\end{theorem}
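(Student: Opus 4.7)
The plan is to produce the block-diagonal form directly from the connected components of the weighted Toeplitz graph $G(A)$, and then verify the principal-submatrix chain via the structural content of Theorem~\ref{thm:sqz}. First I would enumerate the connected components $C_1, C_2, \ldots, C_k$ of $G(A)$, ordered so that $|C_1| \ge |C_2| \ge \cdots \ge |C_k|$ (with a canonical tie-breaking rule suggested by the chain condition). Taking $P$ to be the permutation matrix that concatenates the original-label orderings of the vertices in $C_1, C_2, \ldots, C_k$, one sees that $P^T A P$ is block-diagonal because distinct components share no edges. Theorem~\ref{thm:irreducible_connect} then certifies irreducibility of each diagonal block, and Theorem~\ref{thm:sqz} states that, under the normalized labeling of $C_i$, the $i$-th diagonal block becomes a symmetric Toeplitz matrix $T_i$. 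Combining these, each $T_i$ is an irreducible symmetric Toeplitz matrix.

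The crucial part is the chain condition: whenever $i \ge j$, the matrix $T_i$ must appear as a principal submatrix of $T_j$. Writing $T_j = T[b_0^{(j)}, b_1^{(j)}, \ldots, b_{m_j - 1}^{(j)}]$ with $m_j = |C_j|$, I would argue that the sequences $(b_\ell^{(i)})$ form a nested family on their common range, namely $b_\ell^{(i)} = b_\ell^{(j)}$ for $0 \le \ell \le \min(m_i, m_j) - 1$. The structural reason is that, for each achievable normalized distance $\ell$, there is a fixed original difference $d(\ell) \in \{0\} \cup S_A$ such that any two vertices at normalized distance $\ell$ inside any component differ by $d(\ell)$ in the original labeling. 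Granted this invariance, one has $b_\ell^{(i)} = a_{d(\ell)} = b_\ell^{(j)}$, which yields the prefix nesting and, after selecting the appropriate rows and columns, realizes $T_i$ as a principal submatrix of $T_j$.

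I expect the hard part to be proving the invariance of $d(\ell)$ across components. Components of $G(A)$ need not coincide with full residue classes modulo $\gcd(S_A)$: boundary vertices near $1$ or $n$ can split off into their own components, so a uniform description of the normalized distances requires extra care. The key would be to identify, inside each component, a natural arithmetic-progression-like structure on the original labels whose step pattern is inherited from $S_A$, as developed in~\cite{chu2024structural}; the reductions introduced later in Section~3 essentially serve to make this invariance both explicit and algorithmically exploitable.
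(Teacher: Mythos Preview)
This theorem is not proved in the present paper: it is quoted from the authors' earlier work~\cite{chu2024structural} and stated here without argument, so there is no in-paper proof to compare your proposal against. Your outline is the natural route and is consistent with how the result is packaged here, since Theorem~\ref{thm:sqz} (also cited from~\cite{chu2024structural}) already furnishes the block-diagonal Toeplitz structure component by component; the only additional content of Theorem~\ref{thm:FNF} over Theorem~\ref{thm:sqz} is precisely the principal-submatrix chain, which is what you isolate as the hard part.

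Your self-diagnosis is accurate: the invariance of the original-label gap $d(\ell)$ across all components is the substantive step, and it is not something you can extract from the statements available in this paper alone. In particular, your appeal to ``the reductions introduced later in Section~3'' is misplaced for this purpose---those reductions are built to count and locate components efficiently, not to establish the prefix-nesting of the first rows of the $T_i$; that structural fact is proved in~\cite{chu2024structural} and is simply imported here. So as a proof sketch your proposal is on the right track but genuinely incomplete at exactly the point you flag, and the completion lives in the cited reference rather than in this paper.
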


\section{Graph Theoretical Framework}


We note that the problem of computing the FNF of a given symmetric Toeplitz matrix can be translated into the problem of identifying the components of corresponding weighted Toeplitz graph. 
In this section, we introduce several graph-theoretic results that facilitate efficient recursive reductions for graph component identification.

It is easy to see that the connectivity of vertices in a weighted graph is irrelevant to the weights or the presence of loops. 
Therefore it suffices to consider loopless weighted Toeplitz graphs with weight $1$ on each edge, that is, the graphs of symmetric Boolean Toeplitz matrices with zero main diagonal.
We note that given a positive integer $n$ and a subset $S$ of $[n-1 ]$, a symmetric Boolean Toeplitz matrix $A$ of order $n$ is uniquely determined so that $S_A = S$.

\begin{theorem}\label{thm:bigs}
Let $A = T[0, a_1, \ldots, a_{n-1}]$ be a symmetric Boolean Toeplitz matrix with zero main diagonal.
Suppose  $2s_0 > n$ where $s_0=\min S_A$.
Then $G(A) \cong G(T [a_m, \ldots, a_{n-1}]) \cup I_m$ which implies that $G(A)$ has $m$ more components than $G(T [a_m, \ldots, a_{n-1}])$ where $m = 2 s_0 - n$.
\end{theorem}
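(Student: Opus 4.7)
The plan is to translate the hypothesis $2s_0 > n$ into a concrete description of the isolated vertices of $G(A)$, peel them off, and then exhibit an explicit bijection between the remaining vertices and the vertex set of $G(T[a_m,\ldots,a_{n-1}])$ that preserves adjacency.

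First I would locate the isolated vertices. Since $s_0 = \min S_A$, every edge of $G(A)$ joins two vertices at distance at least $s_0$. A vertex $v \in [n]$ is therefore isolated precisely when no $w \in [n]$ satisfies $|v-w| \ge s_0$, i.e.\ when $v - s_0 < 1$ and $v + s_0 > n$. Equivalently, $n - s_0 + 1 \le v \le s_0$. This range is nonempty exactly under the hypothesis $2s_0 > n$, and it contains $s_0 - (n-s_0+1) + 1 = 2s_0 - n = m$ integers. Conversely, every vertex $v \le n - s_0$ has $v + s_0 \in [n]$ as a neighbor, and every $v \ge s_0 + 1$ has $v - s_0 \in [n]$ as a neighbor, so the non-isolated vertex set is exactly $U := \llbracket 1, n-s_0 \rrbracket \cup \llbracket s_0+1, n \rrbracket$, which has cardinality $2(n-s_0) = n-m$.

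Next I would construct an isomorphism between the subgraph $G(A)[U]$ and $G(A')$, where $A' := T[a_m, a_{m+1},\ldots, a_{n-1}]$ has order $n-m$. Define $\varphi \colon [n-m] \to U$ by $\varphi(i) = i$ for $i \in \llbracket 1, n-s_0 \rrbracket$ and $\varphi(i) = i + m$ for $i \in \llbracket n-s_0+1, n-m \rrbracket$. A brief computation shows $\varphi$ is a bijection onto $U$. Since $S_{A'} = \{k - m : k \in S_A\}$ (note $m < s_0$, so every $k \in S_A$ satisfies $k \ge m$), an edge $ij$ lies in $E(G(A'))$ iff $|i-j| + m \in S_A$. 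I would now check edge preservation in three cases: when $i,j$ both lie in $\llbracket 1, n-s_0 \rrbracket$ or both in $\llbracket n-s_0+1, n-m \rrbracket$, the distance $|\varphi(i)-\varphi(j)| = |i-j|$ is at most $n-s_0-1 < s_0$, hence not in $S_A$; meanwhile $|i-j| + m \le s_0 - 1$ is also below $s_0$, hence not in $S_A$ either, so both graphs have no edge. When $i$ and $j$ lie in different halves, we get $|\varphi(i) - \varphi(j)| = |i-j| + m$, so the two adjacency conditions coincide.

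The only real subtlety is verifying that $U$ really is the full set of non-isolated vertices (both directions of the implication) and that no edges are overlooked or created by the shift; this is the step where the inequality $2s_0 > n$ does all the work, since it forces $n-s_0 < s_0$ and thereby ensures that the two halves of $U$ are internally edge-free. Combining the isomorphism $G(A)[U] \cong G(A')$ with the observation that the remaining $m$ vertices of $G(A)$ are all isolated yields $G(A) \cong G(A') \cup I_m$, and the component count follows immediately.
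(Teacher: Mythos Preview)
Your proposal is correct and follows essentially the same route as the paper: identify the middle block $\llbracket n-s_0+1,\, s_0\rrbracket$ of isolated vertices, then show that the normalized labeling between the remaining $n-m$ vertices and $[n-m]$ is a graph isomorphism onto $G(T[a_m,\ldots,a_{n-1}])$. The only cosmetic differences are that you run the bijection in the opposite direction and handle edge preservation via a three-case split (both endpoints in the same half versus different halves), whereas the paper first observes directly that every edge of $G(A)$ must straddle the two halves and then checks the one nontrivial case; both arguments amount to the same computation $|\varphi(i)-\varphi(j)| = |i-j| + m$ for cross-half pairs.
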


\begin{proof}
Take an edge $uv$ in $G(A)$ with $v<u$. 
Then $u = v + s$ for some $s \in S_A$.
Thus 
\begin{equation}\label{eq:uv}
u =v+s \ge 1+s \ge 1+s_0 \quad\text{and}\quad v=u-s \le n-s \le n-s_0. 
\end{equation}
Since $uv$ was arbitrarily chosen, every endpoint of an edge is contained in $\llbracket 1, n-s_0 \rrbracket \cup \llbracket 1+s_0,n \rrbracket$.
Thus $n-s_0+1, n-s_0+2, \ldots, s_0$ are isolated vertices in $G(A)$.

Now, we will show that $G(A)[W]$ is isomorphic to $G( T[a_m, \ldots, a_{n-1}])$ where \[W=\llbracket 1, n-s_0 \rrbracket \cup \llbracket 1+s_0,n \rrbracket.\]
We let $B= T[b_0, \ldots, b_{n-m-1}]$ where $b_i = a_{m+i}$ for each $i=0, 1, \ldots, n-m-1$. 
Then \[S_B = \{i \colon\, b_i \neq 0 \} = \{i \colon\, a_{m+i} \neq 0\} = \{i-m \colon\, a_i \neq 0\}\] and so 
\begin{equation}\label{eq:sb}
S_B = \{s-m \colon\, s \in S_A\}.
\end{equation}
Let $\varphi: W \rightarrow [n-m]$ be the normalized labeling of $G(A)[W]$. 
Then \[\varphi(w)=
\begin{cases}
	w & \text{if } w\le n-s_0 ;\\
	w-m &  \text{if } 1+s_0 \le w .
\end{cases}\]
To show that $\varphi$ is an isomorphism, take an edge $uv$ in $G(A)[W]$ with $v<u$.
Then since $G(A)[W]\subset G(A)$, $u-v \in S_A$.
Therefore $u \in \llbracket 1+s_0, n \rrbracket$ and $v \in \llbracket 1, n-s_0 \rrbracket $ as we have shown in \eqref{eq:uv}. 
Thus $\varphi(u) = u-m$ and $\varphi(v)=v$.
Now 
\[
\varphi(u)-\varphi(v) = (u-m)-v = (u-v)-m .    
\]
Since $u-v \in S_A$, $\varphi(u)-\varphi(v) \in S_B$ by \eqref{eq:sb}.
Therefore $\varphi(u)\varphi(v)$ is an edge in $G(B)$. 
Conversely, take an edge $ij$ in $G(B)$ with $i>j$.
Then $i-j \in S_B$, that is, there exists an element $s \in S_A$ satisfying $i-j = s-m$ by~\eqref{eq:sb}.
Thus \[i=j+(s-m) \ge j+(s_0-m) \ge 1+(s_0-m)=\varphi(1+s_0)\]
whereas
\[j = i-(s-m) \le (n-m)-(s-m) = n-s \le n-s_0=\varphi(n-s_0).\]
Since $\varphi$ is a normalized labeling, $\varphi^{-1}(i)=i+m$ and $\varphi^{-1}(j)=j$.
Then $\varphi^{-1}(i)-\varphi^{-1}(j) = (i+m)-j =s \in S_A$. 
Thus $\varphi^{-1}(i)\varphi^{-1}(j)$ is an edge in $G(A)[W]$.
Therefore $\varphi$ is an isomorphism from $G(A)[W]$ to $G(B)$.
\end{proof}

Given a graph $G$ with the vertex set $[n]$ and a positive integer $d$ with $d<n$, we say that $G$ is {\it $d$-reachable} if for any vertices $u$ and $v$ with $|u-v|=d$, there is a walk from $u$ to $v$. 
It is easy to check that if $G$ is $d$-reachable and $d \mid (u-v)$ for some vertices $u$ and $v$ in $G$, then there is an $uv$-walk in $G$. 
The following proposition is immediately true by the definition of Toeplitz matrix. 

\begin{proposition}\label{prop:sareach}
Given a symmetric Toeplitz matrix $A$, $G(A)$ is $s$-reachable for every $s\in S_A$.
\end{proposition}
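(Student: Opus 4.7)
The plan is to observe that the claim follows directly from the definitions in Definition~\ref{def}, with no ingredient beyond unpacking them. So rather than building a substantive argument, I would simply translate the hypothesis into the edge condition of $G(A)$ and exhibit a length-one walk.

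Concretely, I would fix an arbitrary $s \in S_A$ and an arbitrary pair of vertices $u, v \in V(G(A)) = [n]$ with $|u-v| = s$. Since $s \in S_A$, the condition $|u-v| \in S_A$ from Definition~\ref{def} is satisfied, so $uv \in E(G(A))$. The single edge $uv$ is itself a walk from $u$ to $v$, and this is exactly the conclusion we need, so $G(A)$ is $s$-reachable.

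The only thing worth being careful about is the edge case $u = v$, but this cannot occur under the hypothesis since $|u - v| = s$ and $s \in S_A \subseteq \llbracket 0, n-1 \rrbracket$ together with $s \neq 0$ (because $a_s \neq 0$ and the matrix has zero main diagonal in the reduction we are working under; more generally, $s$-reachability is a statement about pairs $u \neq v$ when $s \ge 1$). I would note in passing that the case $s = 0$ is vacuous or trivial.

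Because the argument is this short, there is no genuine obstacle; the main ``work'' is simply flagging that the definition of $G(A)$ was crafted precisely so that every index in $S_A$ induces adjacencies, and therefore reachability in a single step. I would write the proof as two or three lines, explicitly citing Definition~\ref{def} and the definition of $s$-reachability introduced just before the proposition.
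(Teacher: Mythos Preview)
Your proposal is correct and matches the paper's own treatment: the paper states that the proposition ``is immediately true by the definition of Toeplitz matrix'' and gives no further argument, which is exactly the content of your observation that $|u-v|=s\in S_A$ forces $uv\in E(G(A))$ by Definition~\ref{def}. Your side remark about $s=0$ is harmless, since the notion of $d$-reachability is only defined for positive $d$ in the paper.
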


To show the theorem that given a graph satisfying $s$-reachable and $t$-reachable is $\gcd(s,t)$-reachable (Theorem~\ref{prop:reach}), we see an observation from~\cite{G_cheon2023matrix}.
We have slightly modified the statement of Proposition~3.6 in~\cite{G_cheon2023matrix} for the usage in this paper.
While it is written in terms of digraph in~\cite{G_cheon2023matrix}, by replacing each arc with an edge in the digraph, the following proposition is immediately obtained.

\begin{proposition}[\cite{G_cheon2023matrix}]\label{prop:cycle}
Let $A = T[a_0, \ldots, a_{n-1}]$ with $a_s = a_{n-s}= 1$ and $a_i = 0$ if $i \notin \{s, n-s\}$ for a positive integer $s$ with $s \neq n-s$. 
Then $G(A)$ is a disjoint union of cycles each of which has the vertex set $\{i, i+d, \ldots, i+(n/d-1)d\}$ for some $i \in [d]$ where $d = \gcd(n,s)$. 
\end{proposition}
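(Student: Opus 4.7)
My plan is to identify $G(A)$ with a standard Cayley graph on the cyclic group $\mathbb{Z}/n\mathbb{Z}$ and then read off the cycle decomposition from elementary group theory. Because $a_s = a_{n-s} = 1$ are the only nonzero off-diagonal entries, the neighbors of a vertex $v \in [n]$ in $G(A)$ are precisely those $w \in [n]$ with $|w-v| \in \{s, n-s\}$. Since $n - s \equiv -s \pmod{n}$, the natural bijection $\iota\colon [n] \to \mathbb{Z}/n\mathbb{Z}$ sending each vertex to its residue class modulo $n$ carries $G(A)$ isomorphically onto $\mathrm{Cay}(\mathbb{Z}/n\mathbb{Z}, \{\bar s, -\bar s\})$.

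Once this identification is in place, I would invoke the standard structure of Cayley graphs on cyclic groups. The connected components of $\mathrm{Cay}(\mathbb{Z}/n\mathbb{Z}, \{\bar s, -\bar s\})$ are exactly the cosets of the cyclic subgroup $\langle \bar s \rangle = d\mathbb{Z}/n\mathbb{Z}$, which has order $n/d$. Hence there are $d$ components, each of cardinality $n/d$; and because the two generators $\bar s$ and $-\bar s$ are distinct (this is where the hypothesis $s \ne n-s$ enters), every vertex has degree exactly two, so each component is a single cycle. Pulling back through $\iota$, the coset of $\langle \bar s \rangle$ containing $i \in [d]$ corresponds to the arithmetic progression $\{i,\, i+d,\, i+2d,\, \ldots,\, i+(n/d-1)d\} \subseteq [n]$, which is precisely the description in the proposition.

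I do not anticipate a substantive obstacle; the main thing requiring care is bookkeeping, namely checking that the modulo-$n$ identification converts absolute-value adjacency in $[n]$ into $\{\pm \bar s\}$-adjacency in $\mathbb{Z}/n\mathbb{Z}$, and that cosets of $d\mathbb{Z}/n\mathbb{Z}$ match residue classes modulo $d$ in $[n]$. A reader preferring an elementary group-free presentation may instead (i) verify $2$-regularity of $G(A)$ by a short case analysis on whether $v \le s$, $s < v \le n-s$, or $v > n-s$; (ii) observe that $d \mid s$ and $d \mid (n-s)$ force every edge to preserve residue class modulo $d$; and (iii) check that repeated addition of $s$ modulo $n$ starting from any $i \in [d]$ exhausts its residue class, so each such class forms a single cycle. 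Either route completes the proof.
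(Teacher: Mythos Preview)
Your argument is correct. The identification of $G(A)$ with the Cayley graph $\mathrm{Cay}(\mathbb{Z}/n\mathbb{Z},\{\bar s,-\bar s\})$ is exact: for $u,v\in[n]$ one has $|u-v|\in\{s,n-s\}$ if and only if $u-v\equiv\pm s\pmod n$, and the hypothesis $s\neq n-s$ guarantees $\bar s\neq-\bar s$, so the graph is $2$-regular. The components are then the cosets of $\langle\bar s\rangle=d\mathbb{Z}/n\mathbb{Z}$, each a cycle of length $n/d$ (the case $n/d=2$ is ruled out precisely by $s\neq n-s$), and these pull back to the arithmetic progressions $\{i,i+d,\ldots,i+(n/d-1)d\}$ for $i\in[d]$.

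As for comparison with the paper: the paper does not supply its own proof of this proposition. It is quoted from \cite{G_cheon2023matrix} (Proposition~3.6 there), with the remark that the original is stated for digraphs and the undirected version follows immediately by replacing arcs with edges. So your Cayley-graph argument is not competing with anything in the present paper; it simply furnishes a self-contained proof where the paper defers to a citation. The group-theoretic route you take is the natural one and is likely close in spirit to what the cited reference does, since the cycle structure of circulant graphs is a standard consequence of the coset decomposition you invoke.
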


\begin{theorem}\label{prop:reach}
    Let $G$ be a graph with the vertex set $[n]$. 
    If $G$ is both $s$-reachable and $t$-reachable for some distinct positive integers $s$ and $t$ with $s+t \le n$, then $G$ is $\gcd(s,t)$-reachable. 
\end{theorem}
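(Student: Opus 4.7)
The plan is to reduce the statement to an iterated Euclidean algorithm, using the hypothesis $s+t\le n$ only to control the endpoints of short walks. The key reduction I intend to establish is: if $G$ is both $s$-reachable and $t$-reachable with $s>t\ge 1$ and $s+t\le n$, then $G$ is $(s-t)$-reachable. Granting this, the theorem follows by induction on $s+t$, because the replacement $(s,t)\mapsto (s-t,t)$ strictly decreases the sum, preserves the inequality $s+t\le n$, and preserves reachability in both coordinates; when the pair eventually becomes $(d,d)$ with $d=\gcd(s,t)$, the desired $d$-reachability of $G$ is immediate.

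To prove the reduction, fix vertices $u,v\in[n]$ with $|u-v|=s-t$; without loss of generality, assume $v-u=s-t$. I would exhibit an explicit two-step walk from $u$ to $v$ consisting of one $s$-edge and one $t$-edge. There are two natural candidates:
\[
u\;\longrightarrow\; u+s \;\longrightarrow\; (u+s)-t=v,
\qquad
u\;\longrightarrow\; u-t \;\longrightarrow\; (u-t)+s=v.
\]
The first walk is realized inside $[n]$ provided $u+s\le n$; the second provided $u-t\ge 1$. If the first fails, then $u\ge n-s+1$, and the inequality $s+t\le n$ forces $u\ge n-s+1\ge t+1$, so the second walk is available. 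Thus at least one of the two walks exists in $G$, giving a $u$--$v$ walk and hence $(s-t)$-reachability.

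The main obstacle is precisely this boundary issue of keeping the walks inside $[n]$; it is the only place where the hypothesis $s+t\le n$ is genuinely used, and a naive B\'ezout approach (writing $d=as-bt$ and walking by $s$'s and then by $t$'s) would fail because an intermediate vertex such as $u+as$ can overshoot $n$. The two-step alternation above sidesteps this, and the Euclidean induction then unwinds to the base case $s=t=d$, completing the proof.
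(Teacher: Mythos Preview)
Your argument is correct. The subtractive Euclidean reduction works: from $s$- and $t$-reachability with $s>t$ and $s+t\le n$ you get $(s-t)$-reachability by pivoting through either $u+s$ or $u-t$, and the inequality $s+t\le n$ is exactly what guarantees that at least one pivot lies in $[n]$. Iterating drives the pair to $(d,d)$ with $d=\gcd(s,t)$. One minor remark: your phrase ``two-step walk consisting of one $s$-edge and one $t$-edge'' should be read as the concatenation of an $s$-reachability walk and a $t$-reachability walk, since $G$ need not contain actual edges of those lengths; the logic is unaffected.

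The paper takes a different route. It builds the auxiliary Toeplitz graph on $[s+t]$ with edges at distances $s$ and $t$, invokes a cited structural result (the cycle decomposition of that graph into residue classes modulo $d=\gcd(s,t)$), and then, for any $u,v\in[n]$ with $u-v=d$, slides both into a window $\llbracket w+1,w+s+t\rrbracket\subseteq[n]$, reads off a $\{s,t\}$-step path between them inside the window from the cycle structure, and lifts each step back to $G$ via $s$- and $t$-reachability. Your approach is more elementary and self-contained (no external proposition, no auxiliary graph), while the paper's proof reaches $d$-reachability in a single stroke without induction and stays within the Toeplitz-graph language that the rest of the paper is built on. Both proofs use $s+t\le n$ at the same pinch point: keeping an intermediate vertex inside $[n]$.
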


\begin{proof}
Suppose that $G$ is both $s$-reachable and $t$-reachable for some distinct positive integers $s$ and $t$ with $s+t \le n$. 
Let $A = T[a_0, \ldots, a_{s+t-1}]$ where $a_s = a_{t}= 1$ and $a_i = 0$ if $i \notin \{s, t\}$. 
Then $S_A = \{s,t\}$.
In addition, by Proposition~\ref{prop:cycle}, 
\statement{st:cycle}{
$G(A)$ is a disjoint union of cycles each of which has the vertex set \[\{i, i+d, \ldots, i+((s+t)/d-1)d\}\] for some $i \in [d]$ where $d = \gcd(s,t)$.}
Take vertices $u$ and $v$ satisfying $u-v = d$ in $G$. 
Since $n \ge s+t$ and $u = v + d \le v+(s+t)-1$, for $w: = \min\{v-1, n-(s+t)\}$, $u, v \in \llbracket w+1, w+(s+t) \rrbracket$. 
Therefore $u-w, v-w \in [s+t] = V(G(A))$. 
Then, since $(u-w)-(v-w) = u-v = d$, $u-w$ and $v-w$ are in the same component in $G(A)$ by \eqref{st:cycle}. 
Thus there is a walk $w_0w_1 \cdots w_\ell$ from $u-w$ to $v-w$ in $G(A)$ where $w_0 = u-w$ and $w_\ell = v-w$. 
Then, since $S_A = \{s, t\}$, $|w_{i+1}- w_i|  \in \{ s, t\}$ for each $i = 0, 1, \ldots, \ell-1$.
Moreover, $w+w_i \in [n]$ for each $i = 0, 1, \ldots, \ell$.
Then, since $G$ is $s$-reachable and $t$-reachable, there is a walk $W_i$  in $G$ from $w+w_i$ to $w+w_{i+1}$ for each $i = 0, 1, \ldots, \ell-1$. 
Thus there is a walk in $G$ from $u = w+w_0$ to $v = w+ w_\ell$ as follows:
\[
u W_0 (w+w_1) W_1 (w+w_2 )\cdots (w+w_{\ell-1}) W_{\ell-1} v.
\]
Since $u$ and $v$ were arbitrarily chosen, $G$ is $d$-reachable. 
\end{proof}

In the same vein as \cite{G_cheon2023matrix}, we define the contraction of a graph derived from residue classes. 

\begin{definition}
Let $G$ be a graph with the vertex set $[n]$ and $d$ be a positive integer. We define by $G/\mathbb{Z}_d$ the graph obtained from $G$ with the vertex set $\{[1]_d, [2]_d, \ldots, [d]_d\}$ where \[[i]_d = \{ v \in [n] \colon\, v \equiv i \pmod d\}\] and $[i]_d$ and $[j]_d$ are adjacent if and only if $xy$ is an edge of $G$ for some $x \in [i]_d$ and $y \in [j]_d$ (see Figure~\ref{fig:contr} for an illustration).
\end{definition}

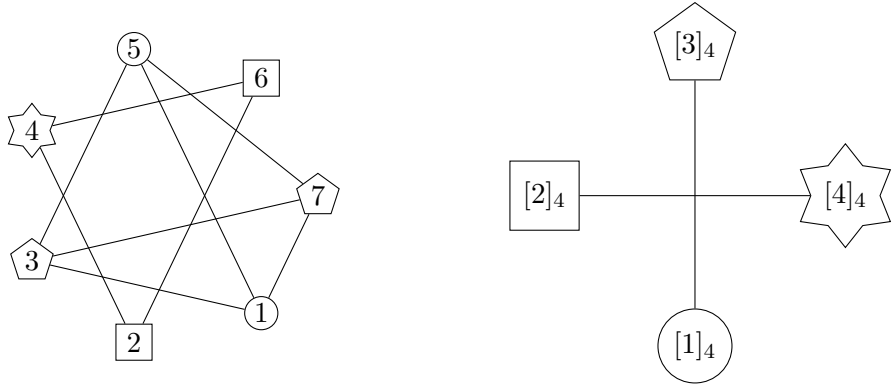
\begin{figure}
\begin{center}
\begin{tikzpicture}[baseline={(0,0)}, scale = 1]

\node[circle, draw, fill=white, inner sep=1.3] (1) at (-360/7:2) {$1$};
\node[circle, draw, fill=white, inner sep=1.3] (5) at (-360*5/7:2) {$5$};
\node[regular polygon, regular polygon sides = 4, draw, fill=white, inner sep=1.3] (2) at (-360*2/7:2) {$2$};
\node[regular polygon, regular polygon sides = 4, draw, fill=white, inner sep=1.3] (6) at (-360*6/7:2) {$6$};
\node[regular polygon, regular polygon sides = 5, draw, fill=white, inner sep=1.3] (3) at (-360*3/7:2) {$3$};
\node[regular polygon, regular polygon sides = 5, draw, fill=white, inner sep=1.3] (7) at (-360*7/7:2) {$7$};
\node[star, star points = 6, draw, fill=white, inner sep=1.3] (4) at (-360*4/7:2) {$4$};

\draw[black,>=stealth]  (1) -- (3);
\draw[black,>=stealth]  (2) -- (4);
\draw[black,>=stealth]  (3) -- (5);
\draw[black,>=stealth]  (4) -- (6);
\draw[black,>=stealth]  (5) -- (7);
\draw[black,>=stealth]  (1) -- (5);
\draw[black,>=stealth]  (2) -- (6);
\draw[black,>=stealth]  (3) -- (7);
\draw[black,>=stealth]  (1) -- (7);
\end{tikzpicture}
\hspace{2cm}
\begin{tikzpicture}[baseline={(0,0)}, scale = 1]

\node[circle, draw, fill=white, inner sep=3] (1) at (-360/4:2) {$[1]_4$};
\node[regular polygon, regular polygon sides = 4, draw, fill=white, inner sep=1.1] (2) at (-360*2/4:2) {$[2]_4$};
\node[regular polygon, regular polygon sides = 5, draw, fill=white, inner sep=1.1] (3) at (-360*3/4:2) {$[3]_4$};
\node[star, star points = 6, draw, fill=white, inner sep=1.1] (4) at (-360*4/4:2) {$[4]_4$};

\draw[black,>=stealth]  (1) -- (3);
\draw[black,>=stealth]  (2) -- (4);

\end{tikzpicture}

\caption{$G:=G(T[0,0,1,0,1,0,1])$ and its contraction $G/\mathbb{Z}_4$}\label{fig:contr}
\end{center}
\end{figure}

\begin{proposition}\label{prop:comp}
Let $G$ be a $d$-reachable graph. 
Then vertices $u$ and $v$ of $G$ are connected in $G$ if and only if vertices $[u]_d$ and $[v]_d$ are connected in $G/\mathbb{Z}_d$. 
\end{proposition}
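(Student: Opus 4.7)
The plan is to verify the two directions of the biconditional separately, projecting walks from $G$ to $G/\mathbb{Z}_d$ for the forward direction and lifting walks from $G/\mathbb{Z}_d$ back to $G$ for the converse. The key observation that underlies both directions is the consequence of $d$-reachability already remarked in the paper: if $d \mid (u-v)$, then $u$ and $v$ are joined by a walk in $G$. I would begin the argument by recalling this fact explicitly, since it is the main tool used in the backward direction.

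For the forward direction, I would take a walk $u = w_0, w_1, \ldots, w_\ell = v$ in $G$ and look at the sequence $[w_0]_d, [w_1]_d, \ldots, [w_\ell]_d$. For each index $i$, either $[w_i]_d = [w_{i+1}]_d$ (in which case the two consecutive entries collapse to a single vertex of $G/\mathbb{Z}_d$) or $[w_i]_d$ and $[w_{i+1}]_d$ are distinct residue classes, in which case the definition of $G/\mathbb{Z}_d$ makes them adjacent because $w_iw_{i+1}$ is an edge of $G$. After deleting any stutters, this yields a walk in $G/\mathbb{Z}_d$ from $[u]_d$ to $[v]_d$.

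For the converse, I would start with a walk $[u]_d = [z_0]_d, [z_1]_d, \ldots, [z_\ell]_d = [v]_d$ in $G/\mathbb{Z}_d$. For each step $i$, adjacency of $[z_i]_d$ and $[z_{i+1}]_d$ in $G/\mathbb{Z}_d$ gives vertices $x_i \in [z_i]_d$ and $y_i \in [z_{i+1}]_d$ such that $x_iy_i \in E(G)$. Setting $y_{-1} := u$ and $x_\ell := v$, note that $y_{i-1}$ and $x_i$ both lie in $[z_i]_d$, so $d \mid (y_{i-1} - x_i)$. By the $d$-reachability remark, there is a walk in $G$ from $y_{i-1}$ to $x_i$ for each $i = 0, 1, \ldots, \ell$. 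Concatenating these walks with the edges $x_iy_i$ produces the desired walk from $u$ to $v$ in $G$.

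The forward direction is essentially automatic, and the backward direction is also not deep, but care is needed in the bookkeeping. The main nontrivial ingredient is the $d$-reachability property, which is invoked inside a single residue class, combined with the definition of adjacency in $G/\mathbb{Z}_d$ that provides appropriate bridging edges. No further obstacles are anticipated.
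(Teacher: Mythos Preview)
Your proposal is correct and follows essentially the same approach as the paper: the forward direction projects a walk in $G$ to $G/\mathbb{Z}_d$ via the residue map (the paper dismisses this as immediate from the definition), and the backward direction lifts a walk in $G/\mathbb{Z}_d$ by choosing witness edges $x_iy_i$ for each step and then using $d$-reachability to connect $y_{i-1}$ to $x_i$ within a single residue class, exactly as the paper does (with only cosmetic differences in indexing).
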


\begin{proof}
By the definition of $G/\mathbb{Z}_d$, one may easy to check the ``only if" part.
To show the ``if'' part, take two vertices $u$ and $v$ in $G$ such that $[u]_d$ and $[v]_d$ are connected in $G/\mathbb{Z}_d$. 
Then there is a walk $[w_1]_d[w_2]_d\cdots [w_k]_d$ for some positive integer $k$ with $w_1 = u$ and $w_k = v$ in $G/\mathbb{Z}_d$. 
By the definition of $G/\mathbb{Z}_d$, for each $i \in [k-1]$, there is an edge $x_i y_i$ in $G$ with $x_i \equiv w_i \pmod d$ and $y_i \equiv w_{i+1} \pmod d$. 
We note that $x_{i+1} \equiv w_{i+1} \equiv y_i \pmod d$ for each $i = 0, \ldots, k-1$ where $y_0 = u$ and $x_{k} = v$. 
Since $G$ is $d$-reachable, there is a $y_ix_{i+1}$-walk $W_i$ in $G$ for each $i = 0, \ldots, k-1$. 
Therefore we have obtained the $uv$-walk 
\[
u = y_0W_0x_1y_1W_1x_2\cdots x_{k-1}y_{k-1}W_{k-1}x_{k} = v
\]in $G$. 
Thus $u$ and $v$ are connected in $G$.  
%
\end{proof}

The following corollary is immediately obtained from Proposition~\ref{prop:comp}.

\begin{corollary}\label{cor:rchcomp}
Let $G$ be a $d$-reachable graph. 
Then $G$ and $G/\mathbb{Z}_d$ have the same number of components. 
\end{corollary}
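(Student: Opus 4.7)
The plan is to establish a bijection between the set of components of $G$ and the set of components of $G/\mathbb{Z}_d$, using Proposition~\ref{prop:comp} as the key equivalence. Define a map $\Phi$ that sends a component $C$ of $G$ to the component of $G/\mathbb{Z}_d$ that contains the vertex $[v]_d$, for any chosen $v \in V(C)$. The first step is to verify that $\Phi$ is well-defined: if $u, v \in V(C)$, then $u$ and $v$ are connected in $G$, so by the ``only if'' direction of Proposition~\ref{prop:comp}, $[u]_d$ and $[v]_d$ lie in the same component of $G/\mathbb{Z}_d$.

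Next, I would verify that $\Phi$ is injective. Suppose $C_1$ and $C_2$ are two components of $G$ with $\Phi(C_1) = \Phi(C_2)$. Pick $u \in V(C_1)$ and $v \in V(C_2)$; then $[u]_d$ and $[v]_d$ lie in the same component of $G/\mathbb{Z}_d$, so the ``if'' direction of Proposition~\ref{prop:comp} gives a $uv$-walk in $G$, forcing $C_1 = C_2$. For surjectivity, take any component $D$ of $G/\mathbb{Z}_d$ and any of its vertices $[i]_d$. Since $d < n$ (by definition of $d$-reachability), the residue class $[i]_d \subseteq [n]$ is nonempty, so we can choose $v \in [i]_d$; then the component of $G$ containing $v$ is mapped to $D$ by $\Phi$.

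The argument is essentially routine once Proposition~\ref{prop:comp} is in hand, and the only minor point requiring care is checking that every vertex $[i]_d$ of the contracted graph is actually represented in $[n]$, which is guaranteed by $d < n$. Since $\Phi$ is a well-defined bijection between the component sets, the two graphs have the same number of components.
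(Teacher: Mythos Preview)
Your proof is correct and is exactly the natural unpacking of the paper's one-line justification (``immediately obtained from Proposition~\ref{prop:comp}''): the bijection $\Phi$ you construct is precisely what makes that immediacy rigorous. The only extra care you took---checking that each residue class $[i]_d$ is nonempty via $d<n$---is a harmless detail the paper leaves implicit.
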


Let $A$ be a symmetric Boolean Toeplitz matrix of order $n$ with $2 \min S_A \le n$.
By Proposition~\ref{prop:sareach}, $A$ is $d_1$-reachable where $d_1 = \min S_A$. By Theorem~\ref{prop:reach}, the following is true:
\statement{st:di}{
if there is $s_i \in S_A$ with $s_i \le n-d_i$ and $d_i \nmid s_i$, then $A$ is $d_{i+1}$-reachable for each $i = 1, 2, \ldots$ where $d_{i+1}=\gcd(d_i,s_i)$. }
We note that $d_i$ in \eqref{st:di} strictly decreases as $i$ increases. 
Algorithm~1 is designed based upon this observation.
We note that 
\statement{st:da}{
the output $d_A$ is the smallest such that $G(A)$ is $d_A$-reachable to the extent that Theorem~\ref{prop:reach} can be used.
}

\begin{algorithm}[H]

    \caption{DA($n,S$) (Determine $d_A$)}
    \textbf{Input:} a positive integer $n$ and a nonempty subset $S$ of $[n-1]$ with $2\min S \le n$. 
    
    \textbf{Output:} \emph{${\rm DA}(n, S) = d_A$ described in \eqref{st:da} where $A$ is the symmetric Boolean Toeplitz matrix of order $n$ satisfying $S_A = S$.}
    
    \medskip
    
    $S_1 \leftarrow S - \{\min S\}$, $d_1 \leftarrow \min S$

    $i \leftarrow 1$
    
    \While{$S_i \neq \emptyset$ and $ \min S_i \le n-d_i$}{\label{lst:whlstrt}   
    
        $S_{i+1} \leftarrow S_i - \{\min S_i\}$\label{lst:o1time}
        
        $d_{i+1} \leftarrow \gcd(d_i, \min S_i)$ 

        $i \leftarrow i+1$
    
    }\label{lst:whlend}

    $d_A \leftarrow d_i$
    
\end{algorithm}

Using Algorithm~1, we could obtain an useful theorem as follows. 

\begin{theorem}\label{thm:main}
Let $A=T[0, a_1, \ldots, a_{n-1}]$ be a symmetric Boolean Toeplitz matrix with $2 \min S_A \le n$. 
Let $d={\rm DA}(n,S_A)$. 
In addition, let $q$ and $r$ be integers satisfying $n=qd+r$ and $0 \le r <d$. 
Then \[G(A)/\mathbb{Z}_{d} \cong G(B)/\mathbb{Z}_{d}\] where $B = T[0, b_1, \ldots, b_{d+r-1}]$ with
\[
b_i = 
\begin{cases}
0 & \text{if } i \le r; \\
a_{(q-1)d+i} & \text{if } r < i < d+r \text{ and } i \neq d; \\
1 & \text{if } i = d.
\end{cases}
\]
Further, $G(A)$ and $G(B)$ have the same number of components.
\end{theorem}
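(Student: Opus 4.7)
The plan is to use the reachability-based contraction machinery to reduce the statement to an isomorphism of contracted graphs, and then to verify that isomorphism via the natural identification of residue classes. First, by \eqref{st:da}, $G(A)$ is $d$-reachable. If $r \ge 1$, then $b_d = 1$ forces $d \in S_B$, so $G(B)$ is $d$-reachable by Proposition~\ref{prop:sareach}; if $r = 0$, then $B$ has order $d$ and $G(B)/\mathbb{Z}_d$ coincides with $G(B)$ itself. Either way, Corollary~\ref{cor:rchcomp} lets us replace each component count by the count in the corresponding contraction, so the ``further'' clause will follow from $G(A)/\mathbb{Z}_d \cong G(B)/\mathbb{Z}_d$.

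The natural candidate for this isomorphism is the identity on residue labels, sending $[i]_d \in V(G(A)/\mathbb{Z}_d)$ to $[i]_d \in V(G(B)/\mathbb{Z}_d)$ for each $i \in [d]$. To check adjacencies, I would first extract the invariant that \emph{every $s \in S_A$ with $s \le n - d$ is divisible by $d$}; this follows from Algorithm~1, because the loop's termination condition $\min S_i > n - d_i$ together with the monotone nonincrease of $d_i$ forces every such $s$ to have been processed during the loop, and the GCD update $d_{i+1} = \gcd(d_i, \min S_i)$ then forces $d$ to divide every processed element. Consequently, only elements of $S_A$ with $s > n - d$ can contribute inter-class adjacencies in $G(A)/\mathbb{Z}_d$, and symmetrically the rule $b_i = 0$ for $i \le r$ strips $G(B)$ of all small-shift generators of the matching type.

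With that reduction, the inter-class edges on each side can be indexed by a single parameter $j_0 \in \{1, \ldots, d-1\}$, where $s = n - d + j_0 \in S_A$ on the $A$-side and $s' = r + j_0 \in S_B$ on the $B$-side. A direct realization count in $[n]$ (respectively in $[d+r]$) shows that each such $s$ (resp.\ $s'$) produces exactly the edges $[y]_d \sim [y + r + j_0]_d$ for $y \in \{1, \ldots, d - j_0\}$ in the corresponding contraction. The defining rule $b_{r + j_0} = a_{(q-1)d + r + j_0}$ then makes $s \leftrightarrow s - (q-1)d$ a bijection between $S_A \cap (n-d, n-1]$ and $S_B \setminus \{d\}$, up to the boundary case $j_0 = d - r$, which is absorbed by the prescription $b_d = 1$ but yields only loops on either side and therefore does not affect the inter-class edge sets.

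The main obstacle I foresee is the divisibility invariant from Algorithm~1, since it requires tracking simultaneously the nonincrease of the sequence $d_i$ and the loop's termination criterion; I expect the cleanest route is a short induction on the loop iteration. Once that invariant is in hand, the remaining step is a mechanical bookkeeping of inter-class edges: both contractions are generated by the same residue shifts $r + j_0$ with the same multiplicities, so the identity on residue labels is the desired isomorphism.
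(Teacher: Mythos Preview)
Your proposal is correct and follows essentially the same route as the paper: the identity on residue labels as the candidate isomorphism, the invariant from Algorithm~1 that every $s\in S_A$ with $d\nmid s$ must satisfy $s>n-d$, and then Corollary~\ref{cor:rchcomp} for the component count. The paper presents the edge-matching as a direct witness-chasing argument (take an edge in one contraction, exhibit a preimage edge in the other) rather than your explicit parametrization by $j_0$, and it compresses the ``further'' clause into a single appeal to Corollary~\ref{cor:rchcomp} for $G(A)$; your separate verification that $G(B)$ is $d$-reachable when $r\ge 1$ is in fact a detail the paper leaves implicit.
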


\begin{proof}
We consider a mapping $\varphi$ from $V(G(A)/\mathbb{Z}_d)$ to $V(G(B)/\mathbb{Z}_d)$ sending $[i]_d$ to $[i]_d$. 
It is easy to check that $\varphi$ is a bijection. 

To show $E(G(B)/\mathbb{Z}_d) \subseteq \varphi(E(G(A)/\mathbb{Z}_d))$, take an edge $[i]_d[j]_d$ in $G(B) /\mathbb{Z}_d$ with $i,j \in [d]$. 
Then $u-v= :s \in S_B$ for some vertices $u$ and $v$ in $G(B)$ with $u \equiv i \pmod d$ and $v \equiv j \pmod d$. 
Since $i$ and $j$ are distinct, $d \nmid s$ and so $s \neq d$.
Thus, by the definition of $B$, $a_{(q-1)d+s}=b_s=1$.
Since $u \le d+r$, $u+(q-1)d \le d+r+(q-1)d =n$.
Then $u+(q-1)d$ and $v$ are vertices of $G(A)$.
Since $(u+(q-1)d)-v=(q-1)d+(u-v)=(q-1)d+s$ and $a_{(q-1)d+s}=1$, $u+(q-1)d$ and $v$ are adjacent in $G(A)$.
Therefore $[i]_d[j]_d$ is an edge in $G(A) /\mathbb{Z}_d$ since $u+(q-1)d \equiv u \equiv i \pmod d$ and $v \equiv j \pmod d$.
Thus we have shown $E(G(B)/\mathbb{Z}_d) \subseteq \varphi(E(G(A)/\mathbb{Z}_d))$.

To show $\varphi(E(G(A)/\mathbb{Z}_d)) \subseteq E(G(B)/\mathbb{Z}_d)$, take an edge $[i]_d[j]_d$ in $G(A) /\mathbb{Z}_d$ with $i,j \in [d]$. 
Then $u-v= :s \in S_A$ for some vertices $u$ and $v$ in $G(A)$ with $u \equiv i \pmod d$ and $v \equiv j \pmod d$. 
We note that $a_s = 1$. 
Since $i$ and $j$ are distinct, $d \nmid s$.
Since $d$ terminates the while loop of Algorithm~1, $s>n-d$. 
Then \[r = n-qd < s-(q-1)d < n-(q-1)d = d+r.\] 
Let $t = s-(q-1)d$. 
Then $b_t=a_{(q-1)d+t} =a_s = 1$ by the definition of $B$. 
We note that
\[(q-1)d \le n-d < s \le v+s = u \le n = qd+r. \]
Thus $(q-1)d < u \le qd+r$ and so $ 0< u-(q-1)d \le d+r$.
On the other hand, we have $1\le v=u-s \le n-s <n-(n-d) = d$.  
Thus $1 \le v \le d$.
Therefore \[\{u-(q-1)d ,\ v\} \subseteq [d+r].\] Since $(u-(q-1)d)-v = s-(q-1)d = t$ and $b_{t}=1$, $u-(q-1)d$ and $v$ are adjacent in $G(B)$. 
Since $i \equiv u \equiv u-(q-1)d \pmod d$ and $j \equiv v \pmod d$, $[i]_d[j]_d$ is an edge in $G(B)/\mathbb{Z}_d$.
Thus we have shown $\varphi(E(G(A)/\mathbb{Z}_d)) \subseteq E(G(B)/\mathbb{Z}_d)$.
Consequently, $\varphi$ is an isomorphism.
Hence \[G(A)/\mathbb{Z}_d \cong G(B)/\mathbb{Z}_d. \]

By \eqref{st:da}, $G(A)$ is $d$-reachable. 
Thus, by Corollary~\ref{cor:rchcomp}, $G(A)$ and $G(B)$ have the same number of components.
\end{proof}

\begin{example}\label{ex:block}

Let $A = T[0, a_1, \ldots, a_{30}]$ be a symmetric Boolean Toeplitz matrix with \[
a_i=1 \quad\text{if }i \in \{12, 18, 24, 29\}.
\]
Now, we determine the number of blocks in the FNF of $A$ by applying Theorems~\ref{thm:bigs} and~\ref{thm:main}.

By applying Theorem~\ref{thm:main} to $A$, we have $G(A)/\mathbb{Z}_6 \cong G(B)/\mathbb{Z}_6$ where $B = T[0, b_1, \ldots, b_6]$ with $b_1=b_2=b_3 = b_4=0$ and $b_5=b_6=1$. 
Further, by applying Theorem~\ref{thm:bigs} to $B$,  $G(B)\cong G(C) \cup I_3$ where $C = T[0, 0, 1, 1]$. 
It is easy to check that $G(C)$ is connected. 

We note that $\{1, 2, 3, 4\}$ is the only component in $G(C)$. 
Since $G(B)\cong G(C) \cup I_3$, $\{3, 4, 5\}$ are isolated vertices in $G(B)$ and $\{1, 2, 6, 7\}$ is a component in $G(B)$. 
Since $G(A)$ is $6$-reachable and $G(A)/\mathbb{Z}_6 \cong G(B)/\mathbb{Z}_6$,  the components of $G(A)$ are 
\[\{3, 9, 15, 21, 27\}, \quad \{4, 10, 16, 22, 28\}, \quad \{5, 11, 17, 23, 29\},
\]
and
\[
\{1, 2, 6, 7, 8, 12, 13, 14, 18, 19, 20, 24, 25, 26, 30, 31\}. 
\]
Moreover, the FNF of $A$ is 
\[
\begin{bmatrix}
X & O & O & O\\
O & X & O & O\\
O & O & X & O \\
O & O & O & Y
\end{bmatrix}
\]
where $X = T[0, 0, 1, 1 ,1]$ and $Y = T[0,y_1, \ldots, y_{18}]$ with $y_i = 1$ if and only if $i \in \{ 6, 9, 12, 17\}$.
\end{example}

\section{Proposed Algorithm}

%

In this chapter, we present a linear-time algorithm which determines the FNF of a symmetric Toeplitz matrix.
Moreover, we explain how the algorithm works and how long it takes. 

Let $A$ be a symmetric Toeplitz matrix of order $n$.
Note that a symmetric Toeplitz matrix is uniquely determined by its first row.
In this vein, we could consider a symmetric Toeplitz matrix as its first row and vice versa.
More specifically, given $A$, we use the first row of $A$ as input data. 
Moreover, to identify the FNF of $A$, it suffices to determine the first row of each diagonal block by Theorem~\ref{thm:FNF}.

In Subsection~\ref{sec:reduction}, by applying Theorems~\ref{thm:bigs} and~\ref{thm:main}, we reduce $A$ to a smaller matrix, while simultaneously extracting information derived from the difference in the number of components of their graphs. 
By repeatedly performing the above reduction, we eventually reach a zero matrix.
In Subsection~\ref{sec:identifying}, we trace the reduction process in reverse to identify the vertices in a component of $G(A)$. 
Finally, we determine the first row of each diagonal block in the FNF of $A$ in Subsection~\ref{sec:determining}. 
These three steps together form a complete algorithm for computing the FNF.
Each step is presented as an individual algorithm, designed to be applied sequentially.
In Subsection~\ref{sec:complexity}, we analyze the time complexity of each step and show that all three algorithms run in linear time, yielding an overall linear-time procedure.

The algorithms designed in this section work correctly regardless of whether $A$ is a boolean matrix with zero diagonal or not.
For simplicity, we will explain the case where $A$ is a boolean matrix with zero diagonal.

\subsection{Graph reduction algorithm}\label{sec:reduction}

For notational convenience, we call a reduction of a Toeplitz matrix an {\it $\alpha$-type reduction} (resp. a {\it $\beta$-type reduction}) if it is an application of Theorem~\ref{thm:bigs} (resp. Theorem~\ref{thm:main}).
Based on these reductions, we design Algorithm~2 as follows.


\begin{algorithm}[H]
\caption{Reduction of $A$}
\textbf{Input:} $A$

\textbf{Intermediate data:} \emph{$(n_j)_{j=1}^t$, $(c_j)_{j=1}^{t-1}$, and $(d_j)_{j=1}^{t-1}$}

\textbf{Output:} \emph{ $c$}

\medskip

read $S_A$, in increasing order, from $A$.\label{lst:init}

$n_1 \leftarrow n$, $S_1 \leftarrow S_A-\{0\}$

$i \leftarrow 1$

\While{$S_i \neq \emptyset$}{\label{lst:whostrt}
\vspace{0.2cm}

    {\bf($\alpha$-type reduction)}
    
    \If{ $2 \min S_i > n_i$  } 
    {\label{lst:ifstrt}
        
        $d_i \leftarrow \min S_i$

        $m \leftarrow 2d_i - n_i$
        
        $n_{i+1} \leftarrow n_i-m$, $S_{i+1} \leftarrow \{s-m \colon s \in S_i\}$, $c_{i} \leftarrow m$
    }\label{lst:ifend}
\vspace{0.3cm}

    {\bf($\beta$-type reduction)}
    
    \Else{\label{lst:elsestrt}
    
    $d_i \leftarrow {\rm DA}(n_i,S_i)$, $q \leftarrow \lfloor n_i / d_i\rfloor$
        
    \If{ $d_i\nmid n_i$}{
         $r \leftarrow n_i - d_iq$
        
        $n_{i+1} \leftarrow d_i+r$, $S_{i+1} \leftarrow \{s-(q-1)d_i \colon s \in S_i, s > n_i-d_i \} \cup \{d_i\}$  
    }\label{lst:elseend2}
    
    \Else{
        $n_{i+1} \leftarrow d_i$, $S_{i+1} \leftarrow \{s-(q-1)d_i \colon s \in S_i, s > n_i-d_i \}$ 
    }
    
        $c_i \leftarrow 0$
    
    }\label{lst:elseend}

    $i \leftarrow i+1$
}\label{lst:whoend}

$t \leftarrow i$

$c \leftarrow \sum_{j=1}^{t-1} c_j + n_t$

\end{algorithm}

In Line~\ref{lst:init}, we read the first row of $A$ to list the elements in $S_A$ in increasing order. 
An $\alpha$-type reduction deals with the case $n_i< 2 \min S_i $ and a $\beta$-type reduction deals with the case $n_i \ge 2 \min S_i $, so the while loop is designed to be divided into the cases $n_i < 2\min S_i$ (Lines~\ref{lst:ifstrt}--\ref{lst:ifend}) and $n_i \ge 2 \min S_i$ (Lines~\ref{lst:elsestrt}--\ref{lst:elseend}). 
To describe how Theorems~\ref{thm:bigs} and ~\ref{thm:main} work, 
\statement{stai}{let $A_i$ be the symmetric Boolean Toeplitz matrix of order $n_i$ so that $S_{A_i} = S_i$. }

In an $\alpha$-type reduction (Lines~\ref{lst:ifstrt}--\ref{lst:ifend}), by Theorem~\ref{thm:bigs}, $G(A_i)$ has at least $m:= 2\min S_i - n_i$ isolated vertices, which are trivial components. 
Moreover, $G(A_{i}) \cong G(T[a_m, \ldots, a_{n_i-1}]) \cup I_m$.
In Line~\ref{lst:ifend}, we take $n_{i+1}$ and $S_{i+1}$ so that $A_{i+1}= T[a_m, \ldots, a_{n_i-1}]$ and we save the loss of $m$ trivial components to $c_i$. 
In a $\beta$-type reduction (Lines~\ref{lst:elsestrt}--\ref{lst:elseend}), the variables $n_{i+1}$ and $S_{i+1}$ are taken so that Theorem~\ref{thm:main} implies that $G(A_i)$ and $G(A_{i+1})$ have the same number of components. 

In Lines~\ref{lst:ifend}, \ref{lst:elseend2}, and \ref{lst:elseend}, we naturally deduce $S_{i+1}$ from $S_i$ in increasing order for each $i$. 
Moreover, one may check that the sum of the elements in $S_{i+1}$ is strictly less than that in $S_i$.
Thus the while loop terminates in at most $\sum_{s \in S_A} s$ iterations.

When the loop is terminated, $A_{i}=T[0,0,\ldots, 0]\cong I_{n_i}$, that is, $A_i$ consists of exactly $n_i$ trivial components.
Therefore we may conclude that the number of components of $G(A)$ equals 
\[c=\sum_{j=1}^{t-1} c_j + n_t. \] 

We introduce and clarify several intermediate quantities that emerge during its execution. These terms encapsulate essential structural information and will be instrumental in both theoretical developments and practical implementations discussed later.
First, we denote by $n_j$ the size of the matrix produced at the $j$-th step of the reduction algorithm. 
As previously discussed, $c_j$ denotes the decrease in the number of components at step $j$. 
Moreover, $d_j$ denotes the minimum element of $S_j$ and we note that $A_j$ is $d_j$-reachable.

\subsection{Identifying the vertices in a component}\label{sec:identifying}

The {\it component index sequence (or shortly, CIS)} is a map $\rho$ from $[n]$ to $[c]$ such that $\rho(u) = \rho(v)$ if and only if $u$ and $v$ are in the same component. 
We design an algorithm for a component index sequence of $A$ from intermediate data $(n_j)_{j=1}^i$, $(c_j)_{j=1}^{i-1}$, and $(d_j)_{j=1}^{i-1}$.

\begin{algorithm}[H]
\caption{Find a component index sequence of $A$}
\textbf{Input:} Intermediate data $(n_j)_{j=1}^t$, $(c_j)_{j=1}^{t-1}$, and $(d_j)_{j=1}^{t-1}$, from the reduction algorithm.
    
\textbf{Output:} \emph{the component index sequence $\rho : [n] \rightarrow [c]$}
    
\medskip
    
$\rho_t (j) = j$ for $j=1, 2, \ldots, n_t$. \label{lst:rhoi}

$M \leftarrow n_t$

\For{$j \leftarrow t-1$ to $1$}{
\vspace{0.2cm}

    {\bf(Recover $\alpha$-type reduction)}
    
\If{$c_j\neq 0$}{

$\delta \leftarrow n_j - n_{j+1}$

\For{$p \leftarrow 1$ to ${n_{j+1}}$}
{

\If{$p \le \frac{n_{j+1}}{2}$}{ \label{lst:alp1st}
$\rho_j(p) \leftarrow \rho_{j+1}(p)$ \label{lst:alp1ed}
}

\Else{\label{lst:alp2st}
$\rho_{j}(p+\delta)\leftarrow \rho_{j+1}(p)$ \label{lst:alp2ed}
}

}

\For{$p \leftarrow 1$ to $\delta$}{\label{lst:alp3st}
$\rho_{j}( \frac{n_{j+1}}{2} + p) \leftarrow M+p$
\label{lst:alp3ed}
}

$M \leftarrow M+(n_{j}-n_{j+1})$

}

\vspace{0.2cm}

    {\bf(Recover $\beta$-type reduction)}

\Else{ \label{lst:betst}

\For{$p\leftarrow 1$ to $n_{j+1}$}{
    $\rho_{j}(p) \leftarrow \rho_{j+1}(p)$
}

\For{$p \leftarrow n_{j+1}+1$ to $n_{j}$}{

    $\rho_{j}(p) \leftarrow \rho_{j}(p-d_{j})$
\label{lst:beted}
}

}

$\rho \leftarrow \rho_1$
}
    
\end{algorithm}

In Algorithm~3, we follow up the process of Algorithm~2 in reverse direction from $A_t$ to $A_1$ (see \eqref{stai}). 
Thus the for loop proceeds with the index $j$ iterating backward from $t-1$ to $1$. 
We note that $G(A_t)$ is the isolated graph of $n_t$ vertices. 
Therefore the component index sequence of $A_t$ is equal to $\rho_t$ given in Line~\ref{lst:rhoi}.

After an $\alpha$-type reduction, the number of components decreases, but after a $\beta$-type reduction does not changes the number of components. 
Since $c_j$  is equal to the difference in the number of components between $G(A_j)$ and $G(A_{j+1})$, the $j$-th reduction is $\alpha$-type or $\beta$-type based on whether $c_j$ equals $0$ or not, respectively. 

An $\alpha$-type reduction eliminates isolated vertices whose labels form a consecutive subsequence.
In this process, the number of vertices with labels greater than the removed ones is equal to the number of vertices with labels smaller than the removed ones. 
Therefore, to restore the CIS after performing an $\alpha$-type reduction, one can take the $(j+1)$-th CIS, split it in half, place the front half at the beginning (Lines~\ref{lst:alp1st}--\ref{lst:alp1ed}) and the back half at the end (Lines~\ref{lst:alp2st}--\ref{lst:alp2ed}), and then fill the center with elements not present in the $(j+1)$-th CIS, ensuring they are isolated (Lines~\ref{lst:alp3st}--\ref{lst:alp3ed}).

A $\beta$-type reduction replaces a $d$-reachable graph with fewer vertices than the original, but still at least $d$ vertices. 
Therefore, to restore the CIS after a $\beta$-type reduction, one can repeatedly copy the first $d$ values of the reduced CIS and continue appending them iteratively  (Lines~\ref{lst:betst}--\ref{lst:beted}).

\subsection{Computing the Frobenius normal form}\label{sec:determining}

Recall that a symmetric Toeplitz matrix is uniquely determined by its first row. Based on this observation, we introduce an algorithm that identifies the first row of each symmetric Toeplitz diagonal block $T_i$, thereby fully determining the entire FNF of $A$ (see Theorem~\ref{thm:FNF}).

\begin{algorithm}[H]
\caption{Determining each blocks of the FNF of given symmetric Toeplitz matrix}
\textbf{Input:} the CIS $\rho$ and the first row $A_{1,*}$ of given symmetric Toeplitz matrix $A$.
    
\textbf{Output:} \emph{$B$ }

\medskip
    
$B$ be a $c \times n$ matrix with null entries. 

$m, \ell$ be sequences of length $c$ with zero entries.

\For{$v\leftarrow 1$ to $n$}{

\If{$m(\rho(v)) = 0$}{

$m(\rho(v)) \leftarrow v$

}

$\ell(\rho(v)) \leftarrow \ell(\rho(v))+1$

$B_{\rho(v), \ell(\rho(v))} \leftarrow A_{1,v-m(\rho(v))+1}$ \label{lst:normal}

}

\end{algorithm}

As the output of the Algorithm~4, the matrix $B$ is constructed such that the $i$-th row corresponds to the first row of the diagonal block $T_i$ in FNF of $A$. 
Here, $m$ and $\ell$ serve as auxiliary sequences used to construct the matrix $B$. 
The value $m(i)$ corresponds to the label of the first (i.e., smallest-labeled) vertex assigned to block $i$, while $\ell(i)$ records the number of vertices already assigned to block $i$, thereby indicating the current insertion position within the block. 
The value $\ell(i)$ can also be viewed as a normalized labeling in the $i$-th component. 
Therefore, by Theorem~\ref{thm:sqz}, Line~\ref{lst:normal} effectively implements the isomorphism induced by normalized labeling.
Hence Algorithm~4 computes the first row of each block of FNF of $A$.

\subsection{Efficiency}\label{sec:complexity}

{\bf The running time of Algorithm~2.}
To compute the running time of Algorithm~2, we first analyze the running time of Algorithm~1. 
In each iteration of the while loop from Line~\ref{lst:whlstrt} to Line~\ref{lst:whlend} in Algorithm~1, computing $\gcd(d_i,\min S_i)$ can be implemented in time $\mathcal{O}(\log(d_i/d_{i+1}))$. 
Thus, each iteration requires $\mathcal{O}(1) + \mathcal{O}(\log(d_i/d_{i+1}))$. 
Since the number of iterations of the while loop is at most  $\mathcal{O}(|S_i|)$, the running time of Algorithm~1 is at most \[\mathcal{O}(|S_i|) + \mathcal{O}(\log(d_1/d_A)) = \mathcal{O}(n_i)\]
(where $|S_i| \le n_i$ and $d_1/d_A \le n_i$). 

Next, we compute the time complexity of Algorithm~2. 
If $n_i< 2\min S_i$, then $n_{i+1} \ge 2\min S_{i+1}$, and thus an $\alpha$-type reduction is followed by a $\beta$-type reduction.
Therefore a $\beta$-type reduction occurs at least once in every two iteration of the while loop. 
In a $\beta$-type reduction, since $d_i \le \min S_i \le n_i/2$, we have  
\[
\frac{n_{i+1}}{n_i} = \frac{d_i+r}{qd_i+r} \le \frac{d_i+r}{2d_i+r} = 1-\frac{d_i}{2d_i+r} <\frac{2}{3}
\]
and thus $n_i$ decreases geometrically (or faster). 
Each iteration of the while loop, whether $\alpha$-type or $\beta$-type, can be implemented in time $\mathcal{O}(n_i)$ time, since $|S_i| \le n_i$.  
Therefore, we conclude that the running time of Algorithm~2 is $\mathcal{O}(n)$.

{\bf The running time of Algorithm~3.}
Each iteration of the outer loop (indexed by $j$) incurs a constant cost of $\mathcal{O}(n_j)$, where $n_j$ represents the size of the corresponding sequence. 
Therefore the overall time complexity of the algorithm can be expressed as:
\[
\mathcal{O}\left(\sum_{j=1}^{t-1}n_j\right).
\]
Given that $n_j$ decreases geometrically (or a faster rate), the summation is bounded by a constant multiple of $n$.
Hence, the overall time complexity is $\mathcal{O}(n)$. 

It is straightforward to see that Algorithm~4 runs in $\mathcal{O}(n)$ time.
Since each of Algorithms~2, 3, and 4 runs in linear time. 
Their sequential application yields the FNF of $A$, and the entire procedure thus takes $\mathcal{O}(n)$ time.

\section{Concluding Remark}
Our results are based on the structural properties inherent to Toeplitz matrices. 
A closely related matrix structure is the Hankel matrix, which shares many similarities with a Toeplitz matrix. 
In fact, Chu and Ryu~\cite{chu2024structural} demonstrated that not only the FNF (Frobenius Normal Form) of a symmetric Toeplitz matrix is the direct sum of symmetric irreducible Toeplitz matrices, but also the FNF of a Hankel matrix is the direct sum of irreducible Hankel matrices. 
Based on this observation, we expect that there also exists an efficient algorithm for computing the FNF of Hankel matrices.

\section{Acknowledgement}
Hojin Chu was supported by a KIAS individual Grant (CG101801) at Korea Institute for advanced Study.
Homoon Ryu was supported by the National Research Foundation of Korea (NRF) grant funded by the Korea government (MSIT) (No. RS-2025-00523206).


\end{document}